\def\maxwidth{ %
  \ifdim\Gin@nat@width>\linewidth
    \linewidth
  \else
    \Gin@nat@width
  \fi
}
\definecolor{fgcolor}{rgb}{0.345, 0.345, 0.345}
\definecolor{shadecolor}{rgb}{.97, .97, .97}
\definecolor{messagecolor}{rgb}{0, 0, 0}
\definecolor{warningcolor}{rgb}{1, 0, 1}
\definecolor{errorcolor}{rgb}{1, 0, 0}
\newenvironment{knitrout}{}{} % an empty environment to be redefined in TeX
\newtheorem{theorem}{Theorem}
\newtheorem{corollary}[theorem]{Corollary}
\newtheorem{definition}[theorem]{Definition}
\newtheorem{method}{Method}
\begin{document}

\title{Certified Mapper: Repeated testing for acyclicity and obstructions to the nerve lemma}
\author{Mikael Vejdemo-Johansson and Alisa Leshchenko}

\begin{abstract}
The Mapper algorithm does not include a check for whether the cover produced conforms to the requirements of the nerve lemma. To perform a check for obstructions to the nerve lemma, statistical considerations of multiple testing quickly arise. 

In this paper, we propose several statistical approaches to finding obstructions: through a persistent nerve lemma, through simulation testing, and using a parametric refinement of simulation tests.

We suggest Certified Mapper -- a method built from these approaches to generate certificates of non-obstruction, or identify specific obstructions to the nerve lemma -- and we give recommendations for which statistical approaches are most appropriate for the task.
\end{abstract}

\maketitle

\section{Introduction}
\label{sec:introduction}

The Mapper model has found widespread use since its initial creation \parencite{singh_topological_2007,lum_extracting_2013}.

The Mapper method starts with a topological space $X$ (often, but not always a point cloud in some $\mathbb{R}^d$) paired with a map $f:X\to C$ to some coordinate space $C$ with a cover $C=\bigcup_{i\in\mathcal I} C_i$. 
The cover $\mathcal C=\{C_i\}$ pulls back to a cover $f^{-1}\mathcal C$ of $X$, which can be refined by replacing each cover element $C_i$ by its connected components $\pi_0C_i$ to form a refined cover $\pi_0f^{-1}\mathcal C$.
This refined cover is the \emph{Mapper cover} of $X$, and the \emph{Mapper complex} is the nerve complex $\mathcal N\pi_0f^{-1}\mathcal C$.

On data, the topological space is replaced by a finite metric space $(X,d)$, and the connected components functor replaced by a clustering scheme.

\subsection{Main results}
\label{sec:main-results}

In this paper, we 
\begin{enumerate}
\item Define a Mapper certificate: an indication that a Mapper cover admits the Nerve lemma
\item Define obstructions to a Mapper certificate
\item Prove that a separation condition suffices to use the Persistent Nerve Lemma (Theorem \ref{thm:persistent-nerve-lemma}) to produce a certificate (or an obstruction)
\item Suggest several methods for statistical testing where the separation condition is not fulfilled. Among the suggested methods, we provide explanations for the methods that fail, and power analysis and validation for the methods that succeed.
\end{enumerate}

A Certified Mapper analysis -- Mapper with a certificate of non-obstruction -- brings additional surety of fidelity of shape to the Mapper analysis, through the applicability of nerve lemmata to the Mapper cover.

\section{Nerve Lemmata}
\label{sec:nerve-lemmata}

The production of a cover and use of a nerve complex in Mapper evokes the \emph{Nerve lemma} and its variants:

\begin{theorem}[Nerve lemma]
Let $X$ be a topological space and $\mathcal U=\{U_i\}$ a cover of $X$.
If $\mathcal U$ is a \emph{good cover} -- each non-empty intersection $U_{i_1}\cap\dots\cap U_{i_k}$ is contractible -- then $X$ is homotopy equivalent to the nerve complex $\mathcal N\mathcal U$.
\end{theorem}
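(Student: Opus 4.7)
The plan is to pass through an intermediate space, the \emph{Mayer--Vietoris blowup} (equivalently the homotopy colimit of the diagram of intersections), and show it is homotopy equivalent both to $X$ and to $\mathcal{N}\mathcal{U}$. Writing $U_\sigma = \bigcap_{i \in \sigma} U_i$ for non-empty finite $\sigma \subseteq \mathcal{I}$, form
\[
B(\mathcal{U}) \;=\; \coprod_{\sigma} U_\sigma \times \Delta^{|\sigma|-1} \;\big/\;\sim,
\]
where $\sim$ glues along the face inclusions $U_\tau \hookleftarrow U_\sigma$ and the corresponding simplicial face maps. There are two canonical projections: $p : B(\mathcal{U}) \to X$ forgets the simplex coordinate, and $q : B(\mathcal{U}) \to \mathcal{N}\mathcal{U}$ forgets the $X$ coordinate.

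First I would verify that $q$ is a homotopy equivalence. The fibre of $q$ over the relative interior of a simplex $\Delta^\sigma$ is precisely $U_\sigma$, which is contractible by the good-cover hypothesis. A skeleton-by-skeleton gluing argument using the contractibility of each $U_\sigma$ — or equivalently Quillen's Theorem A applied to the diagram of intersections — then shows $q$ is a weak homotopy equivalence onto the CW target $\mathcal{N}\mathcal{U}$.

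Second I would verify that $p$ is a homotopy equivalence. Choose a partition of unity $\{\varphi_i\}$ subordinate to $\mathcal{U}$ and define a section $s : X \to B(\mathcal{U})$ sending $x$ to the point $\bigl(x,\sum_i \varphi_i(x)\, e_i\bigr)$ living in the simplex indexed by the support $\{i : x \in U_i\}$. A straight-line deformation along the simplex fibres retracts $B(\mathcal{U})$ onto $s(X)$, so $p$ is a homotopy equivalence. Composing $q$ with a homotopy inverse of $p$ gives $X \simeq \mathcal{N}\mathcal{U}$, as desired.

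The hardest part is not the homotopy-theoretic bookkeeping but the hypothesis management, and this is exactly what motivates the rest of the paper. The argument for $p$ requires a partition of unity, hence that $\mathcal{U}$ be open (or at least numerable) and $X$ paracompact; weakening to closed covers, to covers produced by clustering, or to covers where contractibility is only approximate all break this step. The variants relevant in the Mapper setting therefore demand a persistent or statistical re-reading of the phrase \emph{good cover}, which is the subject of the Persistent Nerve Lemma and the statistical tests introduced below.
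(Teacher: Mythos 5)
The paper states this as classical background and gives no proof at all, so there is nothing internal to compare against; your argument is the standard Mayer--Vietoris blowup (homotopy colimit) proof, and it is essentially correct as a sketch. Two remarks. First, the step for $q$ is worded a little too loosely: having contractible fibres over the open simplices does not by itself make a map a weak equivalence; the correct justification is the one you allude to in passing, namely that $B(\mathcal U)$ is the homotopy colimit of the diagram $\sigma\mapsto U_\sigma$ over the poset of simplices of the nerve, that the objectwise map to the constant one-point diagram is an equivalence by the good-cover hypothesis, and that homotopy colimits preserve objectwise equivalences --- with $\mathrm{hocolim}\,(\ast)=\mathcal N\mathcal U$. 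Second, your closing observation is substantive and worth keeping: the argument for $p$ genuinely requires a partition of unity, hence an open (or numerable) cover of a paracompact space, and the theorem as stated in the paper omits this hypothesis entirely (it is false for arbitrary covers --- a closed cover of $S^1$ by two arcs meeting in two points has nerve an interval). Since the Mapper construction produces covers by clusters rather than by open sets, flagging exactly which hypotheses break is the right thing to emphasize, and it is consistent with the paper's motivation for moving to persistent and statistical substitutes for the good-cover condition.
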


Contractible means homotopy equivalent to the one-point topological space.
The homotopy conditions and statement can be relaxed to a homological nerve lemma

\begin{theorem}[Homological nerve lemma]
Let $X$ be a topological space and $\mathcal U=\{U_i\}$ a cover of $X$.
If $\mathcal U$ is a \emph{good cover} -- each non-empty intersection $U_{i_1}\cap\dots\cap U_{i_k}$ is acyclic -- then $H_*X$ is isomorphic to $H_*\mathcal N\mathcal U$.
\end{theorem}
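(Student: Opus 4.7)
The plan is to build a Čech--singular double complex from the cover and exploit its two spectral sequences: one collapses to $H_*(\mathcal{N}\mathcal{U})$ thanks to the acyclicity hypothesis, the other converges to $H_*(X)$ by a generalized Mayer--Vietoris argument. Comparing the two abutments yields the desired isomorphism.

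Concretely, first I would form the double complex $E_{p,q} = \bigoplus_{i_0<\cdots<i_p} S_q(U_{i_0}\cap\cdots\cap U_{i_p})$, equipped with the singular boundary $\partial$ in the vertical direction and the alternating-sum Čech restriction $\delta$ in the horizontal direction. To make the Mayer--Vietoris step work at the singular level, I would also pass to the subcomplex $S^{\mathcal{U}}_*(X)$ of $\mathcal{U}$-small singular chains; the classical barycentric subdivision argument shows that $S^{\mathcal{U}}_*(X)\hookrightarrow S_*(X)$ is a quasi-isomorphism, so this replacement does not change $H_*(X)$.

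Next I would run the two spectral sequences of $\mathrm{Tot}(E_{\bullet,\bullet})$. Filtering by $p$ first, the vertical $E^1$ is $E^1_{p,q} = \bigoplus_\sigma H_q(U_\sigma)$, which vanishes for $q>0$ by the acyclicity of all non-empty intersections and, in degree $q=0$, reproduces the simplicial chain group of $\mathcal{N}\mathcal{U}$ in degree $p$. The induced horizontal differential is exactly the nerve boundary, so this spectral sequence collapses at $E^2$ with total-degree $p$ piece $H_p(\mathcal{N}\mathcal{U})$. Filtering by $q$ first, the augmented row $0\to S^{\mathcal{U}}_q(X)\to \bigoplus_i S_q(U_i)\to\bigoplus_{i<j}S_q(U_i\cap U_j)\to\cdots$ is exact, because each $\mathcal{U}$-small singular simplex lies inside some $U_i$ and one can write an explicit contracting homotopy via a choice of index for each small simplex; therefore the $E^1$ page is concentrated in the $p=0$ column with $E^1_{0,q}=H_q(X)$, and again collapses.

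The main obstacle in the plan is the bookkeeping around the singular-subdivision step and the exactness of the augmented Čech--singular complex: singular chains in $X$ need not be subordinate to the cover, and one must make precise the class of covers one restricts to (open, or more generally permitting cover-small subdivision) so that $S^{\mathcal{U}}_*(X)\simeq S_*(X)$ and the augmented row is exact in every $q$. A second subtlety is the convention on ``non-empty intersections'' and reduced versus unreduced homology, which has to be consistent throughout so that the $q=0$ row of the first spectral sequence really is the nerve chain complex rather than a shifted or augmented variant. Once these foundational points are fixed, the double-complex comparison is routine and produces the natural isomorphism $H_*(X)\cong H_*(\mathcal{N}\mathcal{U})$.
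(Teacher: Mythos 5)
The paper does not prove this statement: it is quoted as a classical result (the homological nerve lemma) and used as background, so there is no in-paper argument to compare against. Your proposal is the standard textbook proof via the \v Cech--singular double complex and its two spectral sequences, and it is essentially correct. You correctly isolate the two genuine technical points: (i) one must replace $S_*(X)$ by the $\mathcal U$-small chains $S_*^{\mathcal U}(X)$, which requires the cover to be open (or at least to admit cover-small subdivision) so that the inclusion is a quasi-isomorphism, and (ii) exactness of the augmented rows, which follows from the contracting homotopy given by a choice of index for each small simplex. Two minor quibbles: in the second filtration the horizontal homology at $p=0$ is $S_q^{\mathcal U}(X)$ on the $E^1$ page, and it is only on $E^2$ (after taking vertical homology) that you obtain $H_q(X)$ --- your write-up conflates these two pages; and the collapse of the first spectral sequence should be justified by noting that $E^1$ is concentrated in the row $q=0$, so all higher differentials vanish for degree reasons. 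Neither affects the validity of the argument. Note also that this double-complex method only yields an isomorphism of homology groups (as the statement requires), not the homotopy equivalence of the stronger nerve lemma, so your approach is correctly matched to the homological version being proved.
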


Acyclic means the homology is isomorphic to the homology of the one-point topological space.
\parencite{govc2016approximate} proved a \emph{persistent homology nerve lemma}. 
We will be using this in Section~\ref{sec:obstructions}.

\begin{theorem}[Persistent homology nerve lemma]\label{thm:persistent-nerve-lemma}
Let $X$ be a filtered simplicial complex and $\mathcal U=\{U_i\}$ a cover of $X$.
If $\mathcal U$ is an $\epsilon$-\emph{good cover} -- each non-empty intersection $U_{i_1}\cap\dots\cap U_{i_k}$ is $\epsilon$-interleaved with the empty complex -- then persistent $H_nX$ is $2(n+1)\epsilon$-interleaved with persistent $H_n\mathcal N\mathcal U$.
\end{theorem}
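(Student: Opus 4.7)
The plan is to adapt the classical blowup-and-spectral-sequence proof of the nerve lemma, tracking the interleaving error introduced at each page. First I would form the Mayer--Vietoris blowup $\mathrm{Bl}(\mathcal U)$, the simplicial object whose $p$-simplices record a simplex of $X$ together with an ordered $(p{+}1)$-tuple of cover elements containing it. This construction sits between $X$ and $\mathcal N\mathcal U$ with two canonical projections: the projection to $X$ realizes $\mathrm{Bl}(\mathcal U)$ as glued pieces of $X$ along intersections, and the projection to $\mathcal N\mathcal U$ collapses each fiber of tuples to the corresponding nerve simplex. In the classical setting with contractible intersections both projections are weak equivalences; in the approximate setting I would use them to relate $H_*X$ to $H_*\mathcal N\mathcal U$ through $H_*\mathrm{Bl}(\mathcal U)$.

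Next I would analyze the two Mayer--Vietoris spectral sequences of the bicomplex computing $H_*\mathrm{Bl}(\mathcal U)$. One spectral sequence has $E^1_{p,q}=\bigoplus_{i_0<\dots<i_p} H_q(U_{i_0}\cap\dots\cap U_{i_p})$ abutting to $H_{p+q}X$, and the dual one abuts to $H_{p+q}\mathcal N\mathcal U$ with its $E^1$-terms supported on $q=0$. Both are spectral sequences of persistence modules, and the $\epsilon$-good cover hypothesis says every entry of the ``intersection'' $E^1$ page is $\epsilon$-interleaved with the zero module.

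The technical core is a persistence-interleaving analogue of the standard fact that homology of a bounded filtered complex is built from $E^\infty$-terms by extensions. Inductively, using that for an exact sequence $A\to B\to C$ of persistence modules any two of the three being $\delta$-interleaved with zero forces the third to be $2\delta$-interleaved with zero (and similarly for subquotients and differentials), each passage $E^r\leadsto E^{r+1}$ at most doubles or adds to the interleaving bound. Tracking only total degree $n$, the spectral sequence stabilizes within roughly $n+1$ pages, yielding that $H_n\mathrm{Bl}(\mathcal U)$ is $(n+1)\epsilon$-interleaved with $H_n\mathcal N\mathcal U$. Running the same accounting on the other projection gives another $(n+1)\epsilon$-interleaving with $H_nX$, and the triangle inequality for interleaving distance yields the claimed $2(n+1)\epsilon$ bound.

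The main obstacle is the bookkeeping of interleaving errors under the spectral sequence machinery, specifically formulating and proving the ``two-out-of-three'' lemma for approximate acyclicity in a form strong enough to cover kernels, cokernels, and extensions uniformly. A secondary nuisance is choosing the right category in which to perform the argument so that the blowup is actually filtered-compatible with $X$ and $\mathcal N\mathcal U$; working with the chain-level bicomplex rather than the geometric realization would side-step some point-set issues and make the spectral sequence arguments clean. Once these foundational lemmas are in place, the page-by-page estimate is a routine induction.
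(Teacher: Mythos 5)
The paper does not prove this theorem itself --- it is quoted from the cited reference \parencite{govc2016approximate} --- and your sketch (Mayer--Vietoris blowup complex, the two spectral sequences converging to $H_*X$ and $H_*\mathcal N\mathcal U$, and per-page tracking of interleaving error accumulating to $(n+1)\epsilon$ on each side) is essentially the strategy of that cited proof. As you correctly identify, the real technical content lies in formulating the ``two-out-of-three'' lemmas for $\epsilon$-acyclicity under kernels, cokernels, and extensions of persistence modules, which is exactly where the bulk of the work in that reference is carried out.
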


\section{Obstructions}
\label{sec:obstructions}

If the \emph{good cover} condition of the nerve lemma fails, then the topology can change arbitrarily much: hidden topological features can both create and remove topological structure when passing from $X$ to $\mathcal N\mathcal U$.

For each of these nerve lemmata, locating any one cover element intersection where the corresponding good cover condition is not fulfilled produces an obstruction to the equivalency produced by that nerve lemma. 
A lack of obstruction could be taken as an indication that a topological description of the nerve complex is an appropriate description of the original space.

\begin{definition}
An \textbf{obstruction in dimension $\mathbf{d}$} is a significantly persistent $k$-homology class in a $d-k+1$-fold intersection of cover elements.

A \textbf{certificate in dimension $\mathbf{d}$} for a Mapper cover is a documented absence of obstructions in dimension $d$.
\end{definition}

The persistent homology nerve lemma produces our first method for detecting and quantifying obstructions.
Key to using this is the construction of a filtered simplicial complex on the point cloud $X$ using its cover elements $U_i$.

We define two point clouds to be $\epsilon$-\emph{separated} if 
\[
\min_{x\in U_i\setminus U_j, y\in U_j\setminus U_i} d(x,y) > \epsilon
\]

\begin{theorem}\label{thm:epsilon-acyclic}
If a point cloud $X$ is covered by sub-point clouds $\mathcal U=\{U_i\}$ such that each pair of cover elements $U_i, U_j$ are $\epsilon$-separated, then the \v Cech (Vietoris-Rips) complex with radius at most $\epsilon$ of $X$ is covered by the \v Cech (Vietoris-Rips) complexes with radius at most $\epsilon$ on the cover elements.
\end{theorem}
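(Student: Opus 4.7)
The plan is to show that every simplex $\sigma$ appearing in the $\epsilon$-VR (or $\epsilon$-\v Cech) complex on $X$ has its entire vertex set contained in some single cover element $U_i$. Once that is established, $\sigma$ lies in the $\epsilon$-complex on $U_i$, and the collection of sub-complexes covers the complex of $X$.

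First I would associate to each vertex $v$ of $\sigma$ the nonempty index set $I(v)=\{i:v\in U_i\}$, which is nonempty since $\mathcal U$ covers $X$. The existence of a common cover element for all vertices of $\sigma$ is then equivalent to $\bigcap_{v\in\sigma}I(v)\neq\emptyset$.

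The main step is a combinatorial observation: the family $\{I(v):v\in\sigma\}$ is totally ordered by inclusion. Suppose not; then there exist vertices $v,w$ of $\sigma$ and indices $i,j$ with $i\in I(v)\setminus I(w)$ and $j\in I(w)\setminus I(v)$. Translating, $v\in U_i\setminus U_j$ and $w\in U_j\setminus U_i$, so $\epsilon$-separation forces $d(v,w)>\epsilon$, contradicting the condition $d(v,w)\leq\epsilon$ that made $\{v,w\}$ an edge of the VR complex at scale $\epsilon$. A totally ordered finite family of nonempty sets has nonempty intersection (it equals the minimum member), so any index in this intersection picks out a cover element $U_i$ containing all vertices of $\sigma$.

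The main obstacle is the \v Cech case, where vertices of a simplex at scale $\epsilon$ are only guaranteed to have pairwise distance at most $2\epsilon$ rather than $\epsilon$, so the direct contradiction above is weaker. I would handle this either by adopting the diameter-based convention under which ``radius at most $\epsilon$'' matches the VR pairwise bound (so the argument above transfers verbatim), or by appealing to a point $p\in\bigcap_{v\in\sigma}B_\epsilon(v)$ witnessing the \v Cech condition and arguing that $v,w$ cannot simultaneously sit in exclusive pieces $U_i\setminus U_j$ and $U_j\setminus U_i$ while both being close to $p$. In either formulation, the crux is the same: converting $\epsilon$-separation of cover elements into a total order on the index sets $I(v)$, from which the common containment of $\sigma$ in some $U_i$ is immediate.
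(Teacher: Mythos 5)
Your proposal is correct and takes essentially the same route as the paper: the paper's proof is exactly the contrapositive of your argument, asserting that a simplex contained in no single cover element must have two vertices lying in $U_i\setminus U_j$ and $U_j\setminus U_i$ respectively, which $\epsilon$-separation rules out at scale $\epsilon$. You in fact supply two details the paper leaves implicit -- the justification of that assertion via the total ordering of the index sets $I(v)$ by inclusion, and the caveat about the radius-versus-diameter convention in the \v Cech case (where pairwise distances are only bounded by $2\epsilon$) -- both of which are worth making explicit.
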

\begin{proof}
If a simplex $[x_0,\dots,x_d]$ is in the \v Cech (Vietoris-Rips) complex of $X$, but not in any one complex of a cover element, then there are vertices $x, y$ and cover elements $U_i, U_j$ such that $x\in U_i\setminus U_j$ and $y\in U_j\setminus U_i$. 
Hence, if the cover is $\epsilon$-separated, such an obstructing simplex can only occur at a radius greater than $\epsilon$.
\end{proof}

\begin{corollary}\label{cor:epsilon-acyclic}
If a point cloud $X$ is covered by sub-point clouds $\mathcal U=\{U_i\}$ such that:

\begin{enumerate}
\item each pair of cover elements $U_i, U_j$ are $\epsilon$-separated
\item each cover element intersection is $\epsilon$-acyclic
\item the oldest death time is at most $\epsilon$
\end{enumerate}

then persistent $n$-homology of \v Cech (Vietoris-Rips) complex of $X$ is $2(n+1)\epsilon$-interleaved with persistent $n$-homology of $\mathcal N\check{C}_*\mathcal U$ ($\mathcal NVR_*\mathcal U$).

Hence, if the point cloud is embedded in $\mathbb R^d$, then the complexes are $2(d+1)\epsilon$-interleaved across all homological dimensions.
\end{corollary}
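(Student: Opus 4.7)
The plan is to apply Theorem~\ref{thm:persistent-nerve-lemma} to the filtered cover of the \v Cech (or Vietoris--Rips) complex of $X$ induced by $\mathcal U$. The three hypotheses of the corollary are arranged to supply, in turn, the three ingredients that the persistent nerve lemma requires: an honest cover of the filtered complex, an $\epsilon$-goodness condition on its intersections, and control on the persistence range where both hold.

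Concretely, condition~(1) together with Theorem~\ref{thm:epsilon-acyclic} guarantees that for every radius $r \le \epsilon$, the complex $\check C_r X$ (respectively $VR_r X$) equals the union of its restrictions $\check C_r U_i$ (resp.\ $VR_r U_i$). So $\{\check C_* U_i\}$ (or $\{VR_* U_i\}$) is a genuine filtered cover of $\check C_* X$ (or $VR_* X$) throughout the filtration range $[0,\epsilon]$. Condition~(2) then ensures that each non-empty intersection of these cover elements is $\epsilon$-interleaved with the empty complex, which is exactly the $\epsilon$-good cover hypothesis of Theorem~\ref{thm:persistent-nerve-lemma}. Applying that theorem yields a $2(n+1)\epsilon$-interleaving between persistent $H_n$ of the ambient complex and persistent $H_n$ of the nerve, valid wherever the cover is good. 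Condition~(3) extends the interleaving across the full persistence axis: because every feature has died by filtration $\epsilon$, no information from the range $r > \epsilon$ contributes to persistent homology, and the interleaving carries through unchanged.

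For the embedded refinement, when $X \subset \mathbb R^d$ the classical nerve lemma applied to the cover of $\mathbb R^d$ by closed balls shows that the \v Cech complex of $X$ is homotopy equivalent to a union of convex sets in $\mathbb R^d$, so its persistent homology is supported in degrees at most $d$. The $n$-dependent bound $2(n+1)\epsilon$ is then at worst $2(d+1)\epsilon$, giving a uniform interleaving across all homological dimensions; the Vietoris--Rips variant follows from the same vanishing.

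The main subtlety is matching the $\epsilon$-acyclicity hypothesis~(2), stated at the level of cover element intersections in the point cloud, with the $\epsilon$-goodness condition of Theorem~\ref{thm:persistent-nerve-lemma}, which is formulated on filtered subcomplexes. This equivalence follows from the separation argument of Theorem~\ref{thm:epsilon-acyclic}, applied simultaneously on every non-empty intersection of cover elements; condition~(1) is precisely what is needed for this identification to propagate all the way up through higher-order intersections.
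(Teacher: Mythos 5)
The paper gives no explicit proof of this corollary; it is presented as an immediate combination of Theorem~\ref{thm:epsilon-acyclic} (conditions (1) and (3) make the filtered complexes on the cover elements an honest cover of the filtered complex on $X$ over the relevant range) with the persistent nerve lemma, Theorem~\ref{thm:persistent-nerve-lemma} (condition (2) supplying the $\epsilon$-goodness). Your proposal reconstructs exactly that intended argument, so it matches the paper's approach. One caveat: your justification of the final sentence for the Vietoris--Rips case --- that homology vanishes above degree $d$ ``by the same vanishing'' as for the \v Cech complex --- is not actually available, since the nerve-lemma-for-balls argument does not apply to Vietoris--Rips complexes and such complexes of point clouds in $\mathbb R^d$ can carry homology above degree $d$ at large scales; here one must instead lean on condition (3) confining all bars to $[0,\epsilon]$, a weakness the corollary's own statement shares.
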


\section{Statistical acyclicity}
\label{sec:statistical-acyclicity}

If the conditions of Theorem \ref{thm:epsilon-acyclic} or Corollary \ref{cor:epsilon-acyclic} are not fullfilled, more work needs to be done to quantify obstructions. In the following sections we will propose and evaluate a number of possible approaches to statistical testing and quantification of obstructions. Ultimately, we will find one strongly recommended method, and a few properties that disqualify otherwise promising ideas.

An inviting statistical approach may decide on a numeric invariant for measuring acyclicity -- persistence length of the most persistent feature of reduced homology, or some symmetric function in the sense of \parencite{adcock2016ring} or a tropical symmetric function \parencite{kalivsnik2014symmetric} -- and measure whether the invariant produced by the cover $\mathcal U$ of the point cloud $X$ is significantly larger than the invariants produced by simulating complexes using some model of persistent homologically trivial barcodes.

This produces a first naïve method for testing acyclicity in a point cloud:
\begin{method}[Generic simulation test of acyclicity]\label{mth:generic}
Given a point cloud $X$, an invariant $\gamma:\{\text{Point clouds}\}\to\mathbb R$, and a null model $\mathcal M$ of random point clouds, we may reject the null hypothesis of acyclicity in favor of non-acyclicity by:
\begin{enumerate}
\item Draw $M_1,\dots,M_{N-1}$ from $\mathcal M$
\item Compute all $\gamma(M_j)$ and $\gamma(X)$.
\item Sort all these $N$ values, and let $r$ be the rank of $\gamma(X)$.
\end{enumerate}
We may then reject the null hypothesis at a level of $p=\frac{N-r+1}{N}$.
\end{method}

\parencite{bobrowski2017maximally} observe that uniformly sampled points in a cube have small largest persistence lengths, and conjecture that the persistence ratios ($d/b$ for a persistent homology class that appears at time $b$ and vanishes again at time $d$) are normally distributed.
This suggests that one useful null model for trivial random point clouds would be a uniform distribution on the bounding box of the point cloud we compare against.

Tradition in persistent homology suggests $\max d-b$, and \parencite{bobrowski2017maximally} suggests $\max d/b$ as useful invariants for measuring acyclicity. 
Both of these invariants have a tendency to vary in scale between different homological dimensions, but for the ratio invariant there is a conjecture of it following a normal distribution.

The ratio invariant is not defined for homological dimension 0, a case not studied by \parencite{bobrowski2017maximally}.
This means that it would not be able to pick up 0-dimensional homology classes in testing.
During preliminary studies, the ratio invariant provided no noticable difference between simulations with signal and simulations without.

When testing for a good cover, however, there is one point cloud for each simplex in the nerve complex -- and since even one single rejection forms an obstruction to the nerve lemma, we need to control for the family-wise error rate (FWER: probability of a single false rejection) rather than the false discovery rate (FDR: expected proportion of false rejections).
To adequately handle these error rates we will need more intricate methods than Method \ref{mth:generic}: we need to either apply an appropriate control method for FWER, or find a new concept of an appropriate statistic so that the simulation test no longer suffers from repeated testing issues.
We will explore candidates for more adequately controlling for FWER in Methods \ref{mth:normal-T} and \ref{mth:quantile-T}, and candidates for changing the simulation statistic in Method \ref{mth:global}. 

\subsection{Null models}
\label{sec:null-model}

All of the simulation based methods rely on being able to draw random point clouds from a null model $\mathcal M$, that models what a contractible space \emph{should} look like.

From the work in \parencite{bobrowski2017maximally} we know that uniform distributions tend to have very short persistence intervals, while the work in \parencite{adler2014crackle} suggests that multivariate normal random data would tend to produce quite large persistence intervals by the weight of the tails of the distribution.

Based on this we would suggest that an equal number of points sampled uniformly from a shape derived from the point cloud we are trying to match would be an appropriate model.

Two shapes immediately suggest themselves for use: we could use a convex hull or an axis-aligned bounding box as a container implied by the data.
Both of these, taken as is, will produce biased results since in both cases data points are on the boundary of the region -- an unlikely result in the case of sampling uniformly at random, since the boundary has measure 0.

We do not know how to produce an unbiased enlargement of the convex hull.
For the bounding box, however, we can estimate the bounds $a, b$ in each dimension separately assuming that the points come from a null model of a uniform distribution on some interval $[a,b]$. 
For coordinates $x_1,\dots,X_N$, such an unbiased estimator is given by
\[
\hat a = \frac{N\cdot\min(x_i)-\max(x_i)}{N-1}
\qquad
\hat b = \frac{N\cdot\max(x_i)-\min(x_i)}{N-1}
\]

\subsection{Adjusting thresholds: Bonferroni, Holm, Hochberg}
\label{sec:bonferroni}

A widely used family of methods for controlling FWER is by adjusting the thresholds of rejection: to have an overall level of $\alpha$ for a hypothesis test, each separate test out of a family of $K$ tests is rejected at a level of $k\alpha/K$ for $k$ some constant depending on the aggregated p-values from the different tests.

Bonferroni correction, known to be overly conservative, uses the sub-additivity of probability measures to suggest a constant $k=1$.
Improved versions include the Holm step-down and the Hochberg step-up processes, both of which derive the $k$ multipliers used from a ranking of the p-values.

For these methods, the number $N-1$ of simulations will be dependent on the size of the nerve complex: with $K$ cells and a desired level of $\alpha$ the simulation load is on the order of $K/\alpha$.
For large covers, the increase in computational load quickly becomes prohibitive.

With an acyclicity test that includes the sizes of the statistics used rather than only their ranks, these correction methods become more accessible: if the non-trivial topology produces a much larger invariant value than the null model, the p-values involved in the correction procedures can shrink below $1/N$.

\subsubsection{Normal approximation of maximal ratios}
\label{sec:normal-approximation-ratio}

If we assume the conjecture in \parencite{bobrowski2017maximally}, the persistence ratios are normally distributed.
We can estimate the mean and variance of these persistence ratios from simulations, and then compare the values directly to the corresponding normal distribution.

Though there is no reason to expect normality for the maximum difference invariant, we could (and do) evaluate the same test built on that invariant as well.

\begin{method}[Normal test of ratio acyclicity]\label{mth:normal-T}
Given a point cloud $X$, and a null model $\mathcal M$ of random point clouds, we may reject the null hypothesis of acyclicity in favor of non-acyclicity by:
\begin{enumerate}
\item Draw $M_1,\dots,M_{N-1}$ from $\mathcal M$
\item Compute all $\gamma(M_j)$ and $\gamma(X)$. Compute 
\[
\mu=\frac{\sum\gamma(M_j)}{N-1} \qquad 
S^2=\frac{\sum(\gamma(M_j)-\mu)^2}{N-2} \qquad
Z=\frac{\gamma(X)-\mu}{S}\sim T(1)\\
\]
\end{enumerate}
We may then reject the null hypothesis at a level of $p=1-\text{CDF}_{\mathcal N(0,1)}(Z)$.
\end{method}

\subsubsection{Normal approximation of quantiles}
\label{sec:normal-approximation-quantile}

\parencite{keener2011theoretical} gives an asymptotic normal distribution for quantiles of arbitrary distributions (with differentiable distribution functions).

\begin{theorem}\label{thm:normal-quantile}
Let $X_1,\dots,$ be iid with common cumulative distribution function $F$, let $\gamma\in(0,1)$ and let $\hat\theta_n$ be the $\lfloor\gamma n\rfloor$ order statistic for $X_1,\dots,X_n$ (or a weighted average of the $\lfloor\gamma n\rfloor$ and the $\lceil\gamma n\rceil$).

If $F(\theta)=\gamma$ and $F'(\theta)$ exists, is finite and positive, then

\[
\sqrt{n}(\hat\theta_n-\theta)\Rightarrow
\mathcal N\left(
0, 
\frac
{\gamma(1-\gamma)}
{F'(\theta)^2}
\right)
\]
\end{theorem}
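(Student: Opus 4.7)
The plan is to convert the statement about quantiles into one about the empirical cumulative distribution function $F_n$, where classical central limit machinery applies. The key observation is that, up to adjustments for ties, the event $\{\hat\theta_n \leq \theta + t/\sqrt{n}\}$ coincides with $\{F_n(\theta + t/\sqrt{n}) \geq \lfloor\gamma n\rfloor/n\}$. Via this identity, the limiting distribution of $\sqrt{n}(\hat\theta_n-\theta)$ can be extracted from the behavior of $F_n$ at a sequence of points shifting toward $\theta$ at rate $1/\sqrt{n}$.

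First I would write $nF_n(\theta+t/\sqrt{n}) = \sum_{i=1}^n \mathbf{1}[X_i \leq \theta + t/\sqrt{n}]$ as a sum of iid Bernoulli indicators with success probability $p_n := F(\theta + t/\sqrt{n})$. A triangular-array central limit theorem (with Lindeberg's condition immediate from the indicators being uniformly bounded) yields
\[
\sqrt{n}\bigl(F_n(\theta+t/\sqrt{n}) - p_n\bigr) \Rightarrow \mathcal{N}\bigl(0,\gamma(1-\gamma)\bigr),
\]
using continuity of $F$ at $\theta$ to identify the limiting variance $p_n(1-p_n)\to \gamma(1-\gamma)$.

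Next I would Taylor-expand $p_n = \gamma + F'(\theta)\, t/\sqrt{n} + o(1/\sqrt{n})$, available by hypothesis. Rewriting the defining condition of the quantile as
\[
\sqrt{n}\bigl(F_n(\theta+t/\sqrt{n}) - p_n\bigr) \geq \sqrt{n}\bigl(\lfloor\gamma n\rfloor/n - p_n\bigr),
\]
the right-hand side converges to $-F'(\theta)\, t$ since $\lfloor\gamma n\rfloor/n \to \gamma$ with error $O(1/n)$. Slutsky's theorem then gives $P\bigl(\sqrt{n}(\hat\theta_n-\theta)\leq t\bigr) \to P(W \geq -F'(\theta)\, t)$ for $W \sim \mathcal{N}(0,\gamma(1-\gamma))$, which by symmetry equals $P\bigl(W/F'(\theta)\leq t\bigr)$. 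The variable $W/F'(\theta)$ is normal with mean zero and variance $\gamma(1-\gamma)/F'(\theta)^2$, as claimed.

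The main obstacle is handling the technical edges of the order-statistic identity: when $\gamma n$ is not an integer, the theorem permits a weighted average of $\lfloor\gamma n\rfloor$ and $\lceil\gamma n\rceil$, and one must verify that the weighting introduces only an $o_P(1/\sqrt{n})$ perturbation so that the limit is undisturbed. This follows from the spacing between consecutive order statistics near $\theta$ being of order $1/(nF'(\theta))$, so a convex combination of two adjacent order statistics differs from either by $O_P(1/n)$, which is negligible relative to the $1/\sqrt{n}$ scaling. Once this is absorbed, the remainder of the argument is essentially the delta method applied across the increasing function $F$.
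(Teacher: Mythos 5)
The paper does not prove this statement at all: it is quoted verbatim from the cited reference (Keener, \emph{Theoretical Statistics}) and used as a black box, so there is no in-paper proof to compare against. Your argument is the standard one for asymptotic normality of sample quantiles --- inverting the order-statistic event into an event about the empirical CDF, applying a triangular-array CLT to the Bernoulli indicators at the shifted point $\theta+t/\sqrt{n}$, Taylor-expanding $F$ using the assumed derivative, and finishing with Slutsky --- and it is correct; it is essentially the proof given in the cited source. The only soft spot is the treatment of the weighted-average case: you justify it via the claim that adjacent order-statistic spacings near $\theta$ are $O_P(1/n)$, which is true but itself needs an argument. A cleaner route is to observe that the $\lfloor\gamma n\rfloor$ and $\lceil\gamma n\rceil$ order statistics each satisfy the same CLT (since both $\lfloor\gamma n\rfloor/n$ and $\lceil\gamma n\rceil/n$ differ from $\gamma$ by $O(1/n)$), and a convex combination is sandwiched between them, so its distribution function is squeezed between two sequences converging to the same continuous limit. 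This is a minor repair, not a gap in the main line of reasoning.
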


We do not know whether diagram invariants will follow a differentiable distribution function -- but if it did, we could use several batches of simulations of the null model to get a statistic with a known variance:
\begin{enumerate}
\item Given a point cloud $X$ and simulations $M_2,\dots,M_N$, we can calculate $\gamma(X)$ and all $\gamma(M_j)$.
\item By ranking all these values, we can find the quantile $q$ of $X$ in this simulated batch. Then
\[
\sqrt{N}(\gamma(X)-F^{-1}(q))\sim
\mathcal N\left(0, \frac{q(1-q)}{F'(F^{-1}(q))^2}\right)
\]
\item If we calculate another $N$ simulations $K_1,\dots,K_N$, and estimate the $q$ quantile $K_{(q)}$, we also know
\[
\sqrt{N}(\gamma(K_{(q)})-F^{-1}(q))\sim
\mathcal N\left(0, \frac{q(1-q)}{F'(F^{-1}(q))^2}\right)
\]
\item We can subtract one expression from the other to yield
\[
\sqrt{N}(\gamma(X)-\gamma(K_{(q)}))\sim
\mathcal N\left(0, 2\frac{q(1-q)}{F'(F^{-1}(q))^2}\right)
\]
\item If we calculate yet another $N$ simulations $L_1,\dots,L_N$ and estimate the $q$ quantile $L_{(q)}$ we also have
\[
\sqrt{N}(\gamma(L_{(q)})-\gamma(K_{(q)}))\sim
\mathcal N\left(0, 2\frac{q(1-q)}{F'(F^{-1}(q))^2}\right)
\]
\item Write $V=2\frac{q(1-q)}{F'(F^{-1}(q))^2}$. Then
\[
\frac{\sqrt{N}(\gamma(X)-\gamma(K_{(q)}))}{V}
{\Large/}
\frac{\sqrt{N}(\gamma(L_{(q)})-\gamma(K_{(q)}))}{V} =
\frac{\gamma(X)-\gamma(K_{(q)})}{\gamma(L_{(q)})-\gamma(K_{(q)})}
\]
is a quotient of two standard normal variables; this distributes as a $T(1)$ random variable.
\end{enumerate}

Knowing the distribution of the ratio we can use the $T(1)$ distribution to build a hypothesis test:

\begin{method}[Ratio T-test of quantile acyclicity]\label{mth:quantile-T}
Given a point cloud $X$, an invariant $\gamma:\{\text{Point clouds}\}\to\mathbb R$, and a null model $\mathcal M$ of random point clouds, we may reject the null hypothesis of acyclicity in favor of non-acyclicity by:
\begin{enumerate}
\item Draw $M_1,\dots,M_{N-1}$ from $\mathcal M$
\item Compute all $\gamma(M_j)$ and $\gamma(X)$. Compute the rank $r$ of $\gamma(X)$ among all these values. Write $x=\gamma(X)$.
\item Draw $M'_1,\dots,M'_N$ and $M''_1,\dots,M''_N$. Write $y$ for the $r$th value among the $M'_*$ and $z$ for the $r$th value among the $M''_*$. 
\item Calculate the test statistic
\[
T = \frac{(x-z)}{V}{\Large/}\frac{(y-z)}{V} = 
\frac{x-z}{y-z}\sim T(1)
\]
\end{enumerate}
We may then reject the null hypothesis at a level of $p=1-\text{CDF}_{T(1)}(T)$.
\end{method}

\subsection{Empirical distributions and normalized maximal persistences}
\label{sec:ecdf-normalized}

Instead of driving down the p-values to comply with a classical control mechanism, we may instead change perspective on the simulation testing.
This approach was developed in conversations with Sayan Mukherjee.

Many invariants of persistence bars differ with the overall scale of the point cloud, so the invariants are not immediately comparable.
If they were, however, then the existence of an obstruction in the cover would be witnessed by the largest value of an invariant.
Therefore a joint test can be built on first making the invariants comparable, and then doing a simulation test where in each simulation step the largest invariant value is extracted.

To make persistence diagram invariants comparable, we suggest two potential approaches for standardization:
\begin{enumerate}
\item If the invariant $\gamma$ are (sufficiently close to) normally distributed, we can studentize our invariant values separately within each local point cloud and its simulated nulls.
\item We can use a non-parametric standardization method, such as histogram equalization within each local point cloud and its simulated nulls.
\end{enumerate}

Based on this we propose the following approach
\begin{method}[Standardized global test of acyclicity]\label{mth:global}
Given a family of point clouds $X_1,\dots,X_K$, an invariant $\gamma:\{\text{Point clouds}\}\to\mathbb R$, and a null model $\mathcal M$ of random point clouds, we may reject the null hypothesis of acyclicity in favor of non-acyclicity by:
\begin{enumerate}
\item Draw $M^1_1,\dots,M^{N-1}_{K}$ from $\mathcal M$.
\item Compute all $\tilde y_i^j = \gamma(M_i^j)$ and $\tilde x_i\gamma(X_i)$.
\item For each $i\in[1,K]$, use $\tilde y_i^j$ to create a standardization method, (ie to calculate mean and standard deviation for the studentization, or to calculate the empirical CDF for histogram equalization) and standardize all $\tilde y_i^j$ to $y_i^j$ and standardize $\tilde x_i$ to $x_i$. 
\item For each $j\in[1,N-1]$ calculate $y_i=\max_j y_i^j$. 
Calculate $x=\max x_i$.
\item Compute the rank $r$ of $x$ among $x$ together with all the $y_i$.
\end{enumerate}
We may then reject the null hypothesis at a level of $p=(N-r+1)/N$.
\end{method}

\section{Experiments}
\label{sec:experiments}

To validate our suggested methods and compare their performances we perform simulation tests on null model data input to verify the level of each correction method, and with a single noisy circle input together with null model data input for a power analysis of each method. 

We use the null model of uniformly distributed points in a plane rectangle, and for computational expediency we restrict our testing to two ambient dimensions.

Our simulations test for all combinations of:

\begin{itemize}
\item $N\in\{100,500\}$ (number of point clouds for each test)
\item $K\in\{5,10,50\}$ (number of simultaneous tests to control)
\end{itemize}

For each box, we draw uniformly at random

\begin{itemize}
\item Box side lengths in $\{0.1,1,10\}$
\item Point counts for a box in $\{10,50,100,500\}$
\item For the power test: in one of the boxes, points on a circle with added multivariate isotropic Gaussian noise with variance from $\{0.1, 0.25\}$ fitted in a square box with side lengths $1\times 1$.
\end{itemize}

The $\alpha$-complex construction is topologically equivalent to \v Cech complexes \parencite{bauer2014morse}, and for speed in our simulations we choose to use the $\alpha$-complex persistent homology calculation in the R package TDA \parencite{fasy2014tda}. 
With simulations in place we perform bootstrap evaluations of level and power of all combinations of:

\begin{itemize}
\item Methods \ref{mth:normal-T}, \ref{mth:quantile-T}, \ref{mth:global} for controlling the FWER.
\item FWER correction with Hochberg's method, standardization with Z-score and histogram equalization.
\end{itemize}

We will use the invariant $\gamma(X) = \max d-b$ of maximum bar length.

We illustrate the process of computing a certification on a real world data set in Section \ref{sec:real-world-data}.

\section{Results}
\label{sec:results}

We will divide our simulation study results into three components: first we will examine the suggestion of a $T(1)$ distribution in Method \ref{mth:quantile-T}.
Next, we validate the FWER control procedures by estimating the probability of false discovery on null model data.
Finally, we will analyze the power of the proposed methods by attempting to detect a single noisy circle in a family of null model data samples.

For the experiments, we precomputed 160000 point cloud invariants.
Since we are working with point clouds in the plane, we computed in homological dimensions 0 and 1, and for each combination of box shapes and point counts as well as for each noise level and point count combination, we generated 5000 point clouds.
All our subsequent results are based on drawing from these precomputed invariants at random, matching box sizes and point counts when producing simulations to match a particular point cloud.

\subsection{Validation}
\label{sec:validation}

\begin{figure}

\begin{knitrout}
\definecolor{shadecolor}{rgb}{0.969, 0.969, 0.969}\color{fgcolor}
\includegraphics[width=\maxwidth]{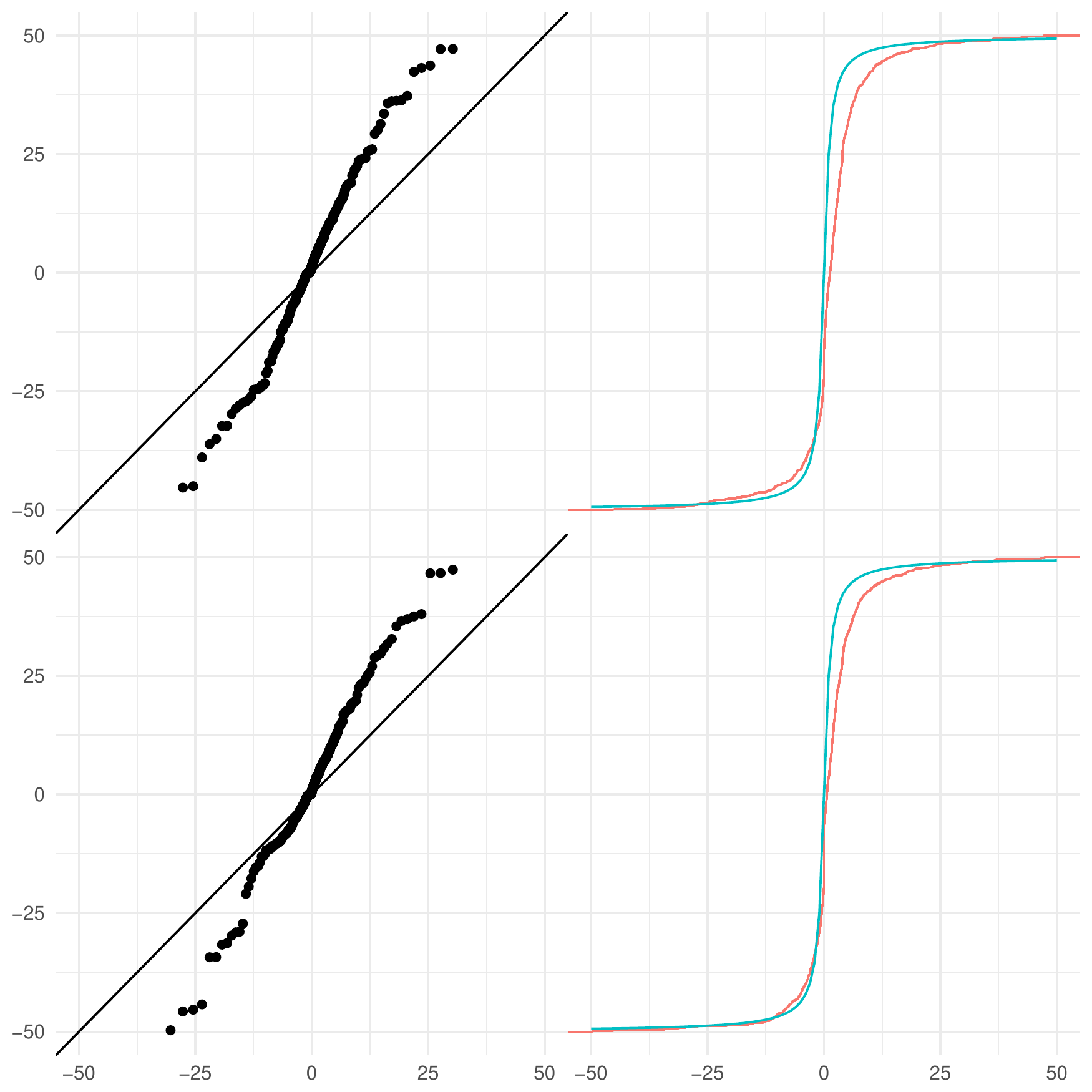} 

\end{knitrout}
\caption{QQ-plots and ECDF-plots for evaluating goodness of fit of $2T$, with $T$ the ratio from Method \ref{mth:quantile-T} against the $T(1)$ distribution. The very heavy tails of the T-distribution produce very large outliers in the tails of the distributions: we have pruned the plots for readability here. \newline
The fit to the $T(1)$ distribution is still not very good, \emph{after} adding the factor of 2 (found empirically). We cannot right now say why this factor was needed.\newline
Top row is the result from using the difference invariant in homological dimension 0 and the bottom row is the difference invariant in homological dimension 1.}
\label{fig:normal-quantile-distr}
\end{figure}

The first claim to validate is the applicability of Theorem \ref{thm:normal-quantile} to the numeric invariant data we would be getting from persistence barcodes.
We discover empirically that for the test statistic $T$ from Method \ref{mth:quantile-T}, we get a better fit to the $T(1)$ distribution using the quantity $2T$, based on 1000 simulated values.
The $T(1)$ distribution has \emph{very} heavy tails -- as a result, the fit remains bad in the tails due to how commonly too large results appear in simulations. We display plots here for the central part of the points, together with the line $y=x$ in the QQ-plots to give a reference for how a perfect fit would be expressed.

Next, we evaluate the empirical level of our proposed methods.
From 100 simulations drawing from pre-computed barcode sizes, the null rejection rates for null model data for our methods are summarized in Table \ref{tbl:p-power}.
For each of the simulations, a random number, between 2 and 50 of point cloud invariants were drawn from the precomputed data.
To each point cloud invariant, another 99 point clouds with matching box sizes and point counts are drawn as a simulation test.
These 100 batches of 100 point clouds go through each of our proposed methods, and rejection rates at confidence levels of 0.1, 0.05 and 0.01 are calculated.

\subsection{Power estimation}
\label{sec:power}

\begin{table}
\begin{tabular}{lcccccc}
\toprule
& \multicolumn{3}{c}{Parametric} & \multicolumn{3}{c}{Global} \\ 
\cmidrule(lr){2-4}\cmidrule(lr){5-7}
$p<$ & Normal test & Log normal test & Quantile T-test & Z-score & log Z-score & Histogram Eq   \\
\\ & \multicolumn{6}{c}{Null model} \\ \cmidrule(lr){2-7} 
0.01 & 0.25 & 0.04 & 0 & 0.01 & 0.02 & 0\\
0.05 & 0.48 & 0.11 & 0.01 & 0.02 & 0.07 & 0\\
0.10 & 0.55 & 0.18 & 0.05 & 0.1 & 0.13 & 0.01\\

\\ & \multicolumn{6}{c}{$\sigma=0.1$} \\ \cmidrule(lr){2-7} 
0.01 & 0.97 & 0.79 & 0.02 & 0.86 & 0.77 & 0\\
0.05 & 0.98 & 0.81 & 0.05 & 0.95 & 0.79 & 0.04\\
0.10 & 0.99 & 0.82 & 0.15 & 0.97 & 0.82 & 0.06\\

\\ & \multicolumn{6}{c}{$\sigma=0.25$} \\ \cmidrule(lr){2-7} 
0.01 & 0.65 & 0.37 & 0 & 0.28 & 0.28 & 0\\
0.05 & 0.82 & 0.52 & 0.01 & 0.5 & 0.42 & 0.02\\
0.10 & 0.85 & 0.55 & 0.04 & 0.58 & 0.49 & 0.07\\

\bottomrule
\end{tabular}
\caption{Rejection rates for null model and noisy circle data using the difference and ratio invariants, and using the methods described above. 
The parametric methods are the methods that rely on an explicit distribution followed by a FWER control method: Method \ref{mth:normal-T} with normal and log-normal distribution assumptions, and Method \ref{mth:quantile-T}.
The global methods refer to Method \ref{mth:global} with either a normal Z-score, log-normal Z-score or histogram equalization method for standardization.\newline
FWER control was performed using Hochberg's method.}
\label{tbl:p-power}
\end{table}

For the power analysis we picked pre-calculated invariants from circles with a $1\times 1$ bounding box, with additive multivariate Gaussian noise with a standard deviation of $0.1$ and $0.25$ respectively.
For each of 100 simulations, one circle invariant was picked, and another random number (between 1 and 49) of null model point cloud invariants added.
This collection of point clouds go through the same process of generating 100-1 null model invariants for each, and run the collections through the described methods.
The result of 100 simulations each at the two noise levels is shown in Table \ref{tbl:p-power}.

\begin{figure}
\begin{knitrout}
\definecolor{shadecolor}{rgb}{0.969, 0.969, 0.969}\color{fgcolor}
\includegraphics[width=\maxwidth]{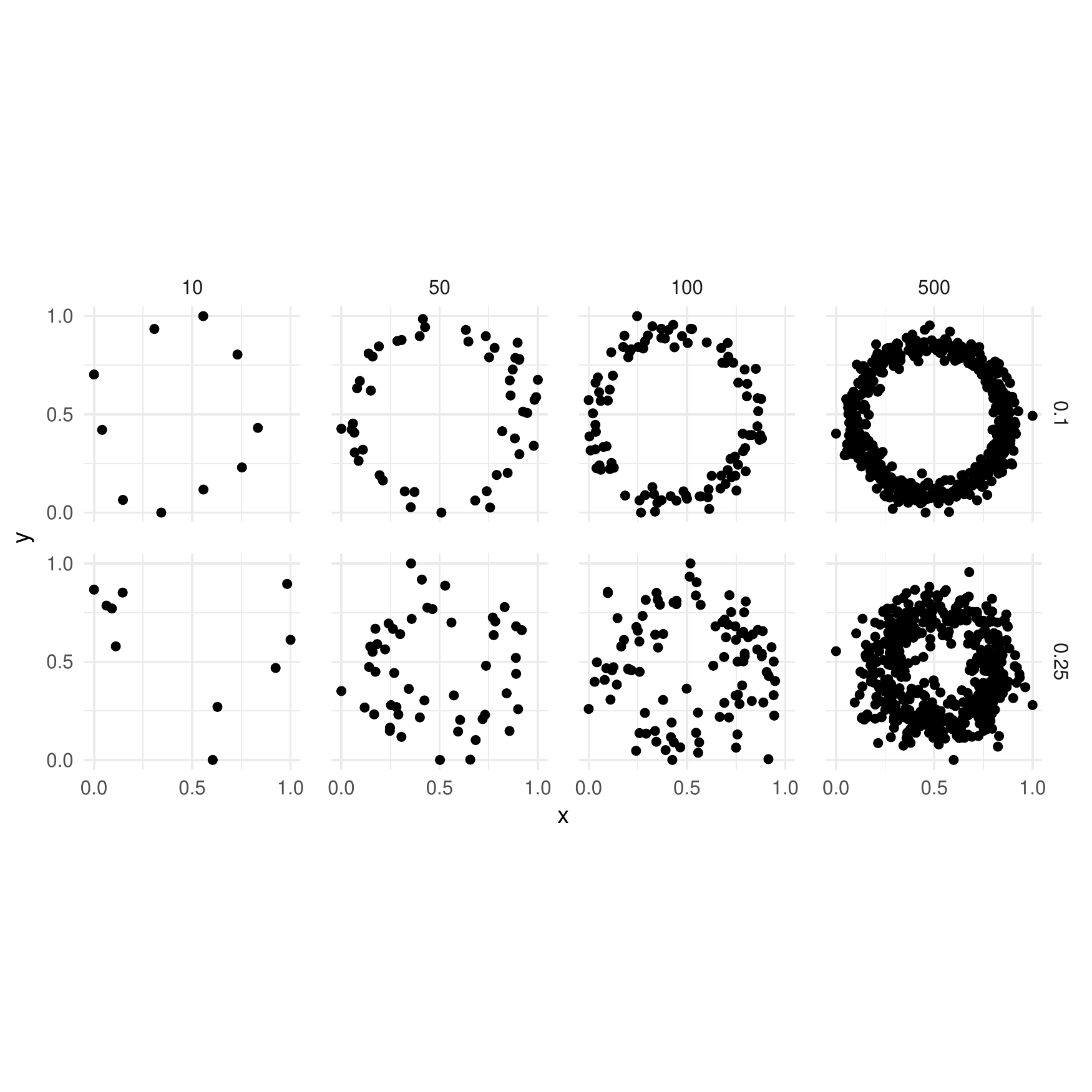} 

\end{knitrout}
\caption{Noisy circles as used by the power calculation. Top row, $\sigma=0.1$ and bottom row $\sigma=0.25$. The plots have, from left to right, 10, 50, 100 and 500 points.}
\label{fig:noisy-circle}
\end{figure}

Examples of the kind of circles we use for the power calculation can be seen in Figure \ref{fig:noisy-circle}

\subsection{Simulated data}
\label{sec:simulated-data}

To show an obstruction in action, we generated 250 random points on the cartesian product of a cross with a circle:
\[
X = (\Delta[0,1]\cup\Delta'[0,1])\times S^1
\qquad
\Delta(x) = (x,x)
\qquad
\Delta'(x) = (x,-x)
\]

The result is a 4-dimensional dataset in the shape of two pipes that intersect in the middle.

A Mapper analysis using the first coordinate as a filter function, with 10 divisions and a 50\% overlap was calculated using \texttt{TDAmapper}. The dataset and the resulting Mapper analysis can be seen in Figure \ref{fig:tubes}.

\begin{figure}
\begin{knitrout}
\definecolor{shadecolor}{rgb}{0.969, 0.969, 0.969}\color{fgcolor}
\includegraphics[width=\maxwidth]{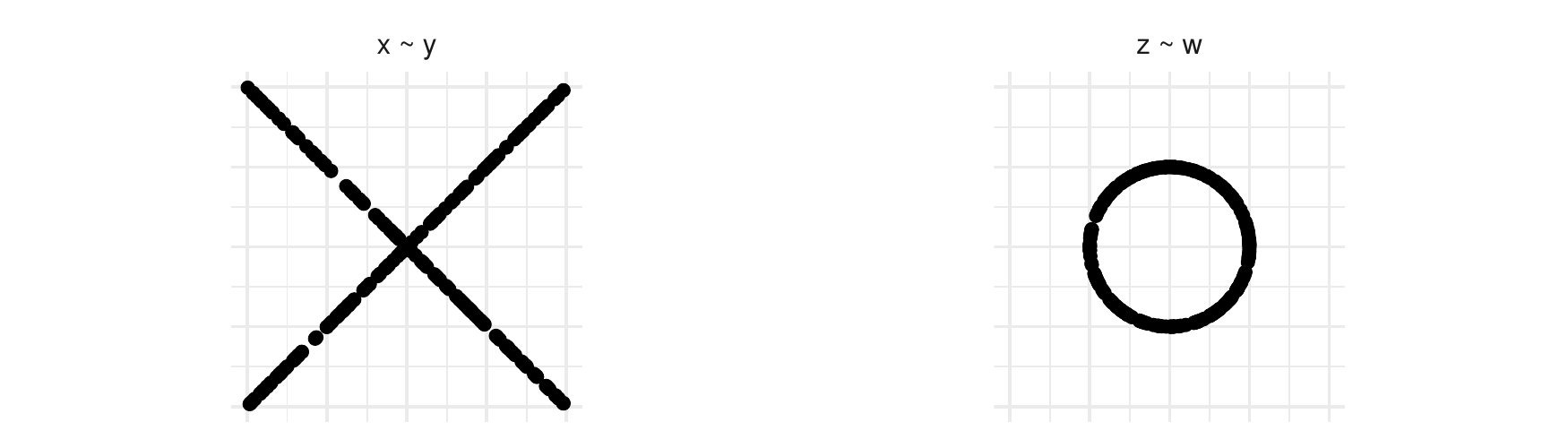} 

\end{knitrout}

\includegraphics[width=\textwidth]{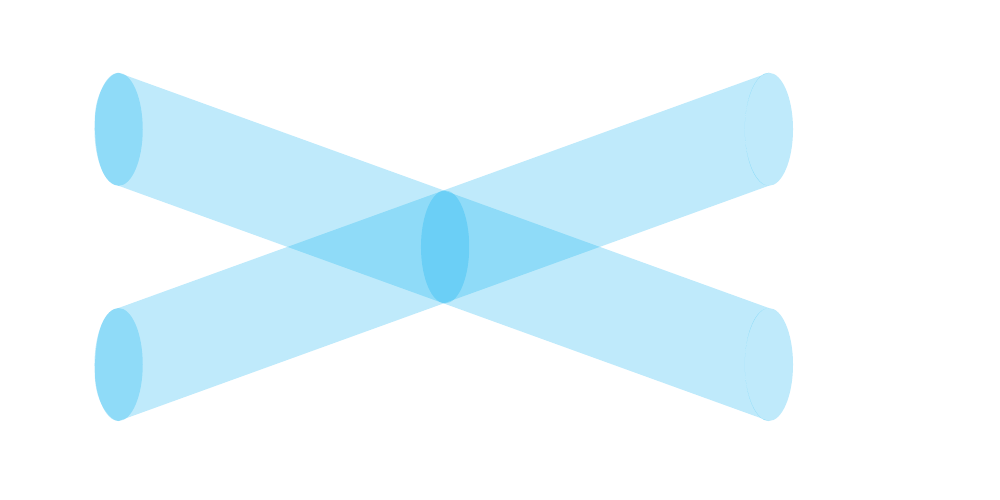}

\begin{knitrout}
\definecolor{shadecolor}{rgb}{0.969, 0.969, 0.969}\color{fgcolor}
\includegraphics[width=\maxwidth]{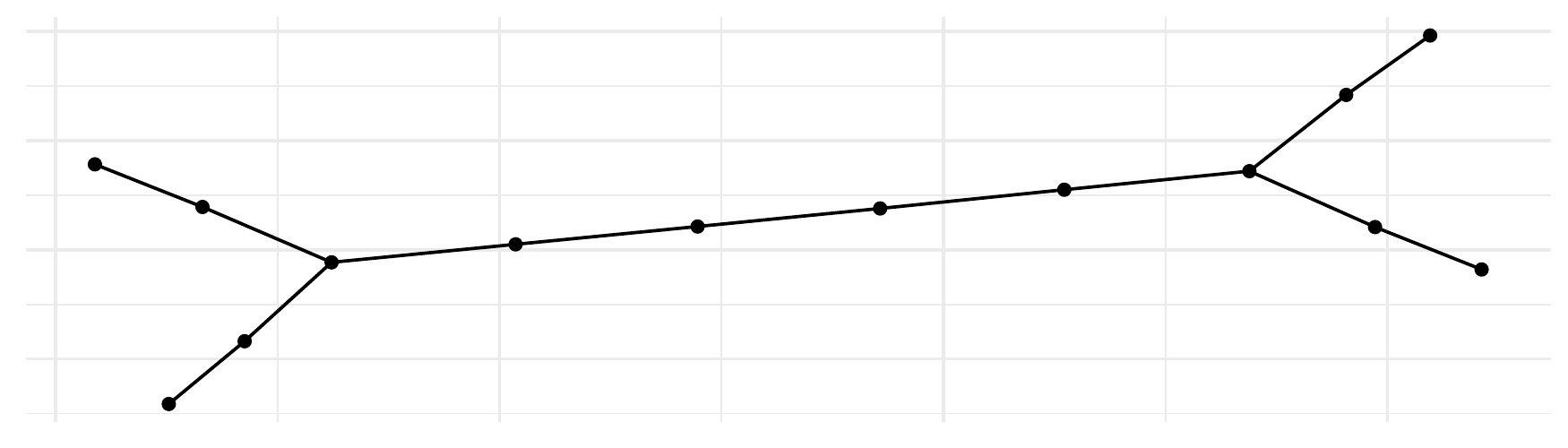} 

\end{knitrout}

\begin{knitrout}
\definecolor{shadecolor}{rgb}{0.969, 0.969, 0.969}\color{fgcolor}
\includegraphics[width=\maxwidth]{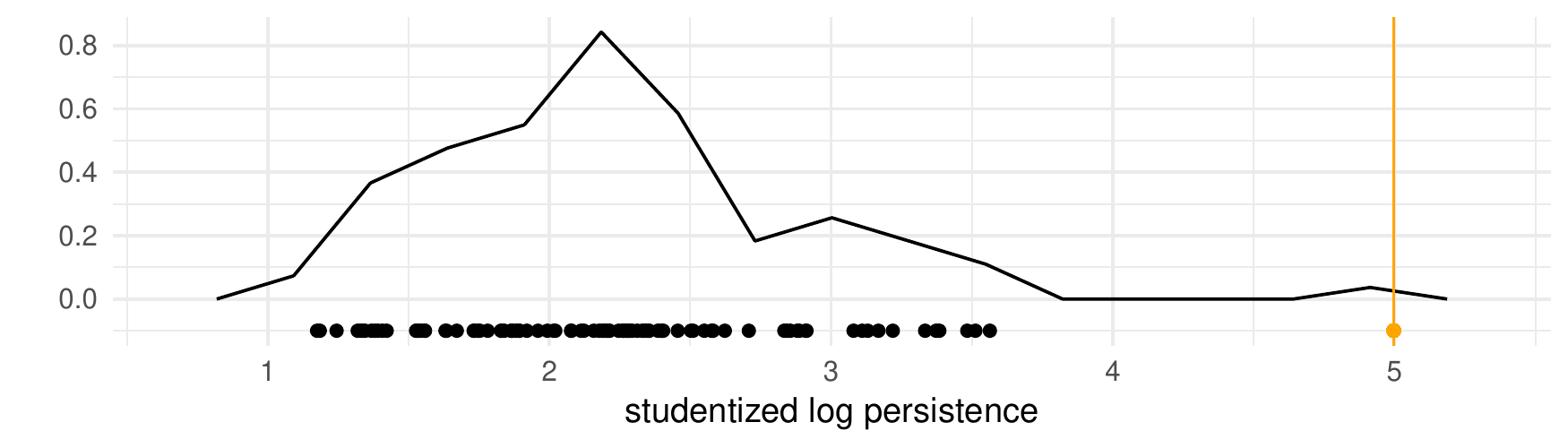} 

\end{knitrout}
\caption{Top left: the dataset in the $x-y$-plane. Top right: the dataset in the $z-w$-plane.\newline
Middle: the Mapper graph produced. The $\times$-like shape is clearly captured by the Mapper analysis, but the $z-w$ circle is absent.\newline
Bottom: frequency curve of the maximal studentized log persistence lengths for each of the $99$ simulations in addition to the dataset itself. Marked in orange and with a vertical line is the corresponding score for the dataset itself.}
\label{fig:tubes}
\end{figure}

We used Corollary \ref{cor:epsilon-acyclic} and Method \ref{mth:global} with the null model described in Section \ref{sec:null-model}.
To standardize we used Z-scores of log persistence lengths.

First, to use the Corollary, we would look for the maximum of lifespans and death times in the data.
This value comes out to $1.11$.
If the sections are $1.11$-separated this would show us that the Mapper graph and the Vietoris-Rips graph on the data were $4.43$-interleaved.
This amount of separation is unlikely, since the bounding box of the entire dataset comes out to $2\times 2\times 1\times 1$ and sliced into 10 slices along the first axis.

The Corollary conditions having failed, we turn to the probabilistic approach.
Using $99$ simulations we get the distribution seen at the bottom of Figure \ref{fig:tubes}.
From a visual inspection, the dataset is a clear outlier -- by ranking the maximal Z-scores over each of the simulations, the dataset comes in at rank 
$100$
for an upper-tailed $p$-value of 
$0.01$
(estimated using the $(N-r+1)/N$ estimate as given by \parencite{davison1997bootstrap})

With a significant result, we can find at least on obstruction by looking for a node or edge with a large Z-score associated to its persistent homology.
The largest Z-scores within the real data is in the $20$th of the simplices (in the ordering generated by our enumeration) which works out to the simplex [7]. The corresponding data points are graphed in Figure \ref{fig:obstruction}.

\begin{figure}
\begin{knitrout}
\definecolor{shadecolor}{rgb}{0.969, 0.969, 0.969}\color{fgcolor}
\includegraphics[width=\maxwidth]{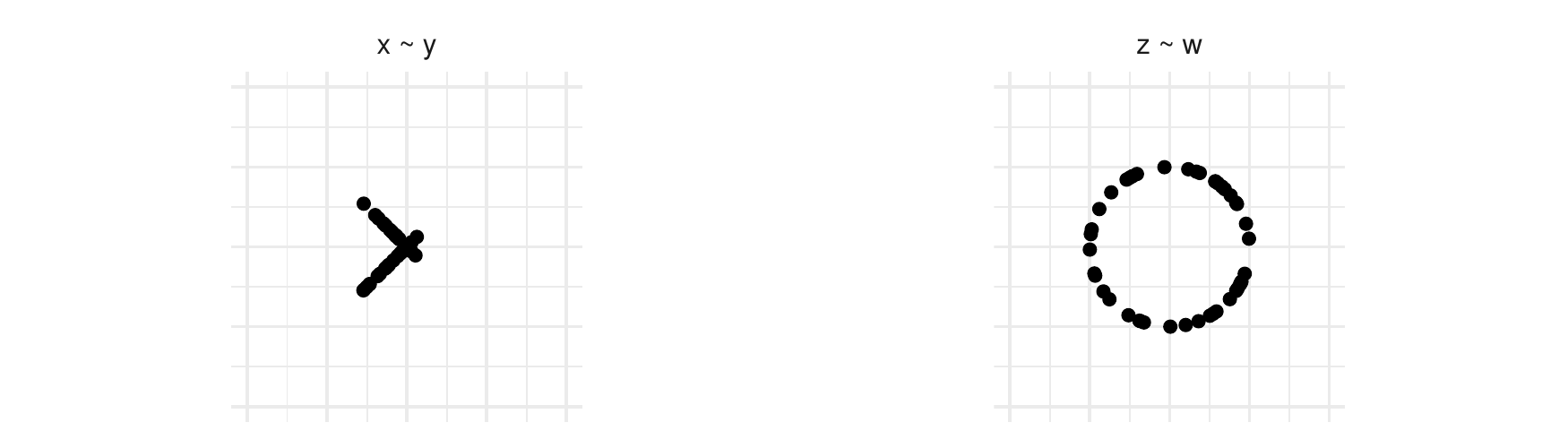} 

\end{knitrout}
\caption{The data subset witnessing the obstruction of highest significance found in the dataset.}
\label{fig:obstruction}
\end{figure}

\subsection{Real world data}
\label{sec:real-world-data}

We ran a Mapper analysis on Fisher's and Anderson's Iris dataset \parencite{fisher36,anderson35}, with a single filter function given by the Petal Length variable, with 10 divisions and a 50\% overlap was calculated using \texttt{TDAmapper}.
Next we ran the results through the certification process with the method suggested by Corollary \ref{cor:epsilon-acyclic} and Method \ref{mth:global} standardized using Z-scores of log persistence lengths.
The resulting Mapper graph can be seen in Figure \ref{fig:iris}.

For the interleaving distance from Corollary \ref{cor:epsilon-acyclic}, we calculate the maximum of lifespans and death times in the data.
This maximum comes out to $0.93$. 
Since we are using a single filter function, the Mapper complex is one-dimensional, so the multiplier for Corollary \ref{cor:epsilon-acyclic} is $4$ and the Corollary tells us that if the sections are $0.93$-separated, then the persistent homology of the resulting filtered graph is $3.73$-interleaved with the true persistent homology of the original dataset.

The Petal-Width variable has a total spread of 2.4, and our Mapper analysis uses 10 sections: a separation of $0.93$ seems highly unlikely.
Hence, the Corollary does not apply and we are forced to look towards probabilistic certification.

Using Method \ref{mth:global}, we calculated 99 simulations in addition to the true data.
We used the unbiased bounding box as a null-model
Values were standardized using the logarithm of the Z-score, as estimated on the simulated values.
For each simulation, the maximal log Z-score were selected across the Mapper graph. 
The distribution of these values can be seen in Figure \ref{fig:iris}.
As the graph indicates, there is no significant obstruction in the data, and by estimating an upper-tailed $p$-value as $(N-r+1)/N$ where $r$ is the rank of the log Z-score from the data set we get a $p$-value of 
$0.59$.

\begin{figure}
\begin{knitrout}
\definecolor{shadecolor}{rgb}{0.969, 0.969, 0.969}\color{fgcolor}
\includegraphics[width=\maxwidth]{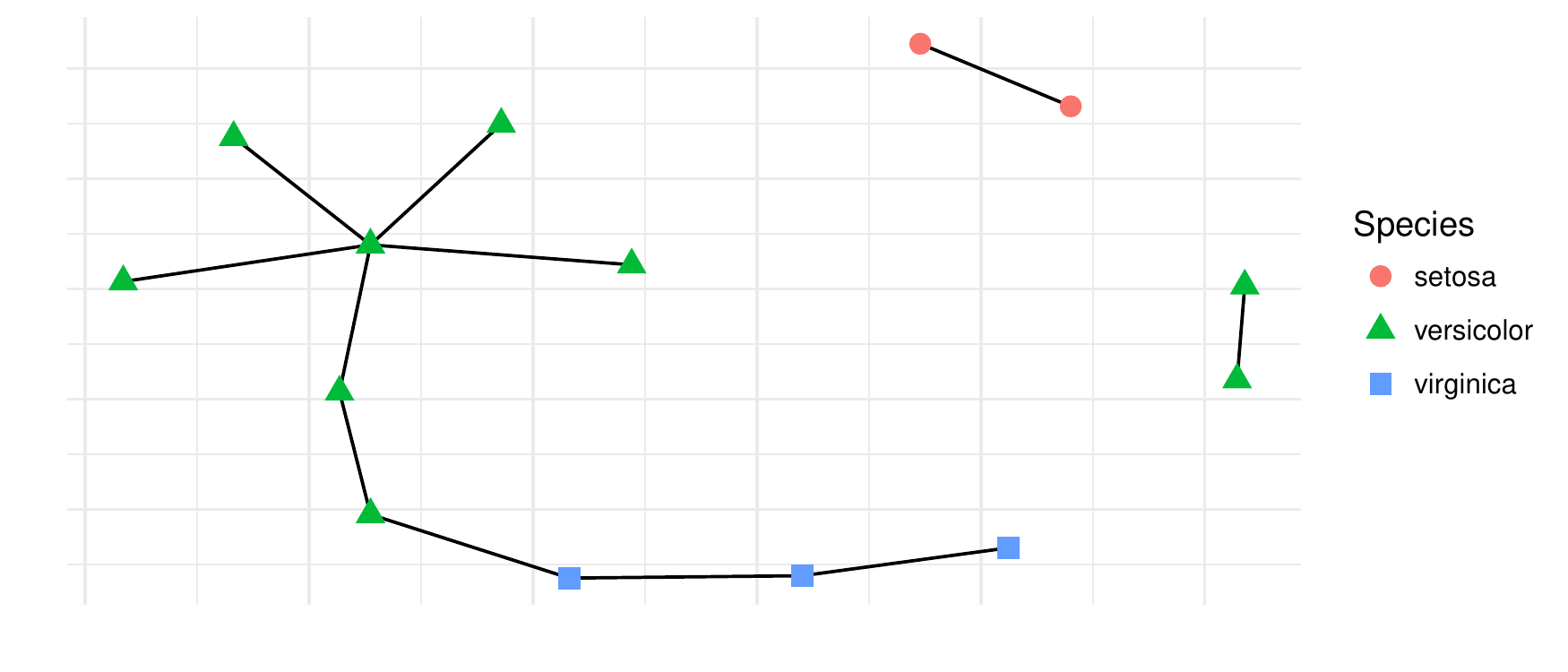} 

\end{knitrout}

\begin{knitrout}
\definecolor{shadecolor}{rgb}{0.969, 0.969, 0.969}\color{fgcolor}
\includegraphics[width=\maxwidth]{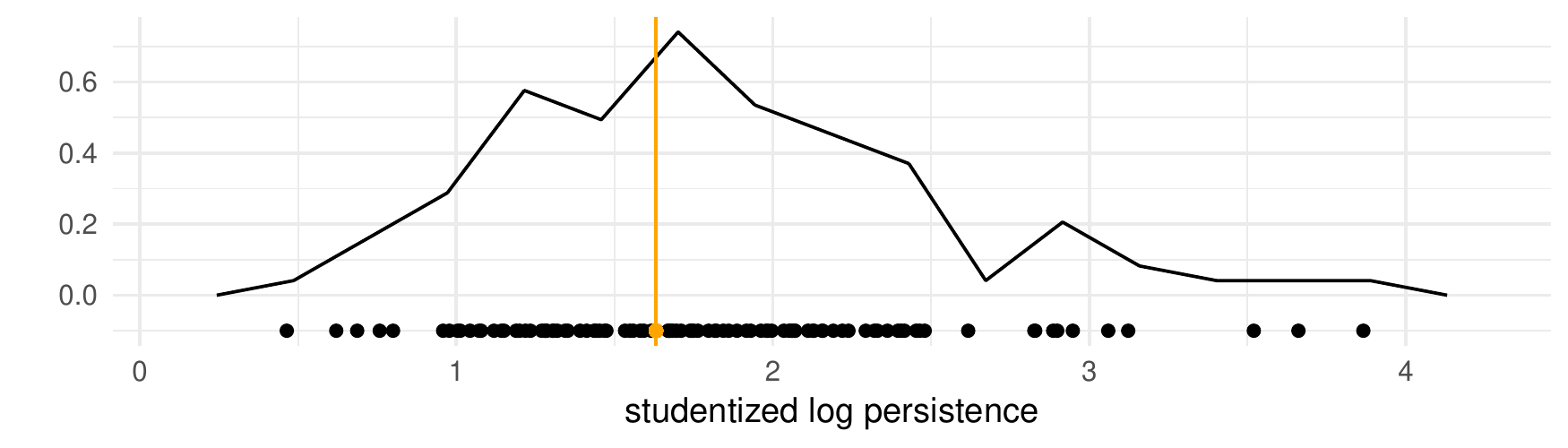} 

\end{knitrout}
\caption{Top: Mapper analysis of the Iris dataset using Petal Length as a filter function.\newline
Bottom: Density of Z-scores of log persistences from applying Method \ref{mth:global} to the Iris mapper graph.
Marked with a vertical line and a separate point, both in orange, is the maximum Z-score from the dataset itself.}
\label{fig:iris}
\end{figure}

\section{Discussion}
\label{sec:discussion}

\subsection{Certified Mapper}
\label{sec:certified-mapper}

Mapper comes close, but not quite all the way, to the nerve lemmata that pervade algebraic topology in general and persistent topology in particular.
As proposed and used, the Mapper algorithm comes with no guarantees beyond sheer luck and stability under modifying parameters for fidelity between data shape and Mapper complex shape.
It is easy to see that hidden topological structure can both introduce and hide homological features in the resulting Mapper complex, and if the structure aligns orthogonally to the Mapper filter functions, there is no way to adjust parameters to find the hidden structure.

We show an example of this in Figure \ref{fig:tubes}: an intersecting pair of cylinders in $\mathbb{R}^4$, with filter functions taken as projections onto the first variable.
Here, the structure of the two first variables -- the figure X -- is clearly seen in the Mapper graph, whereas the tube shapes -- the circles in the $z$-$w$-plane -- are completely invisible in the Mapper complex.
By the statistical multiple testing methods we describe, we get a clear indication of the resulting obstruction: at the bottom of Figure \ref{fig:tubes} we clearly
see the studentized log persistence of the intersecting cylinders to be a far outlier as compared to the null model, and in Figure \ref{fig:obstruction} we can see the shape most clearly illustrating an obstruction -- the hidden $z$-$w$ circle emerges well recognizable.

From real data, using the well studied Iris dataset, we can see an example of a lack of obstructions -- a case where we would issue a certificate and trust the fidelity of the Mapper complex to the topological features of the dataset.
Here, the studentized log persistence is close to the middle of the distribution of studentized log persistences from the null model, giving no reason to believe any Mapper cover element or cover element intersection to contain significant hidden topological features.

Mapper has found widespread use in industry, sometimes dealing with high stakes data analysis tasks.
While in practice Mapper usage often is measured on the value of identified patterns, without fidelity of shape being taken as relevant to the analysis, having a certified lack of obstructions to nerve lemmata would allow us to claim the Mapper complex shape to be a reliable descriptor of the dataset itself.
If reliability of the Mapper analysis is critical to an application, the computational cost of verifying a lack of obstructions can be a good tradeoff for higher reliance on the results.

\subsection{Multiple testing paradigms}
\label{sec:multiple-testing-paradigms}

The first idea we wish to adress is the $T(1)$-distribution of the quantile fraction introduced for Method \ref{mth:quantile-T}.
As can be seen in Figure \ref{fig:normal-quantile-distr}, the fit is not particularly convincing -- certainly not for a direct fit to $T(1)$ -- in which case the line should be a diagonal -- but even after allowing for a rescaling of the test statistic, the fit is not particularly good.

Next we would like to discuss the various tests we proposed.
We had parametric tests -- against a normal or against a $T(1)$ distribution -- for assuming a normal distribution of the $\max(d-b)$ statistics; for assuming a normal distribution of the $\log\max(d-b)$ statistics, or for assuming the $T(1)$ distribution for the observed quantiles.

Using the normal distribution directly on the $\max(d-b)$ statistic performs quite poorly: as can be seen in Table \ref{tbl:p-power}, this test rejects far too much for the null model: false positives abound.
The $p$-values for the null model come out to the range from 0.25 to 0.55 -- increasing the actual level by a factor of between 5.5 and 25.

Next, we consider the approach using a much more plausible normal approximation for $\log\max(d-b)$, this is labeled \emph{Log normal test} in Table \ref{tbl:p-power}.
We see that the level is far more reasonable here: the true level differs from the one suggested by the normal distribution by a factor of between 
1.8 and
4.
As we look to the power of this test, it performs reasonably well too -- rejection rates around 0.80 for the low noise case, and in the range between 0.37 and 0.55 for the higher noise case.

The quantile test starts out promising: the levels are even lower than the cutoffs chosen -- the test looks too conservative as long as we are looking to the null model.
However, with the heavy tails of the $T(1)$ distribution, we can notice when we try to measure power that this test simply does not reject at all.
The distribution we are comparing the test statistic with is so tail heavy that no values seem particularly extreme.

For Method \ref{mth:global}, we see dramatically different results depending on which standardization scheme we choose:

The Z-score standardization performs well: all null rejection rates are somewhat elevated from the chosen levels, and the power to recognize a signal when present is good with low noise and still present at all with higher noise.

In Figure \ref{fig:noisy-circle}, we show examples of the point clouds that the methods need to deal with -- the noise level of $\sigma=0.25$ is quite large.
The 100 point and 500 point circles look quite true, but the 10 point and 50 point circles at $\sigma=0.25$ are noisy enough that it might not be a clear call whether or not to consider the signal to be present at all.
Based on this, one may consider powers around 0.5 to be quite decent in the high noise case we use to measure power, and with both the Z-score based normalizations, Method \ref{mth:global} shows up with decent levels and good powers. 

The histogram equalization works atrociously however: just like with the quantile T-test method, this method pretty much refuses to reject the null hypothesis no matter what point clouds it sees.
On further consideration, the reason why can be seen: with histogram equalization, we are reducing the sizes of bar lengths back to a ranking, so the same issues raised against Method \ref{mth:generic} remain problematic for anything that works with histogram equalization.

\subsection{Recommendations}
\label{sec:recommendations}

We recommend using Certified Mapper whenever fidelity of shape is important to the Mapper analysis.

The persistent nerve lemma and Corollary \ref{cor:epsilon-acyclic} should be used whenever applicable to issue a quantified certificate of non-obstruction.

Where Corollary \ref{cor:epsilon-acyclic} is not applicable, we recommend the certificate of non-obstruction to be issued through a statistical method.

The uniformly distributed points in a bounding box seems to be a reasonable null model.
From \parencite{bobrowski2017maximally} we know uniform distributions to have appropriately small persistence, and the bounding box has an easily accessible unbiased estimator we can use.

On the face of it, the Z-score based global methods and the normal approximation for $\log\max(d-b)$ seem to behave equally well -- and any one of the three would be a reasonable choice.
The parametric $\log\max(d-b)$ test has better power, while the global tests have better levels.
The Z-score global method has particularly impressive power for the low noise case.

From all this, the recommendation we can see is to use the difference invariant and Method \ref{mth:global} with the Z-score normalization.
Doing this, noisier circles will be more difficult to detect, but if the signal is clean, the power of the test stays high.

\section{Future Directions}
\label{sec:future-directions}

In later work we plan to explore strategies to refine a Mapper cover to resolve any obstructions found and produce a certified Mapper complex.

\section{Acknowledgements}
\label{sec:acknowledgements}

The authors would like to acknowledge and thank: 
Sayan Mukherjee for invaluable advice and help designing Method \ref{mth:global};
Anthea Monod and Kate Turner for helpful conversations;
Dana Sylvan for giving feedback and advice on the manuscript;
The MAA for a travel grant;
The Abel Symposium for a participation and travel grant.

\printbibliography
\end{document}